\newtheorem{theorem}{Theorem}
\newtheorem{condition}{Condition}
\newtheorem{lemma}{Lemma}
\newcommand{\zb}{\mathbf{z}}
\newcommand{\fb}{\mathbf{f}}
\newcommand{\eb}{\mathbf{e}}
\newcommand{\Bb}{\boldsymbol{\beta}}
\newcommand{\chib}{\boldsymbol{\chi}}
\newcommand{\Zb}{\mathbf{Z}}
\newcommand{\Fb}{\mathbf{F}}
\newcommand{\Eb}{\mathbf{E}}
\newcommand{\halb}{\widehat{\boldsymbol{\alpha}}}
\newcommand{\hBb}{\widehat{\boldsymbol{\beta}}}
\newcommand{\hfb}{\widehat{\mathbf{f}}}
\newcommand{\cfb}{\check{\mathbf{f}}}
\newcommand{\calb}{\check{\boldsymbol{\alpha}}}
\newcommand{\cBb}{\check{\boldsymbol{\beta}}}
\newcommand{\hk}{\widehat{k}}
\newcommand{\hFb}{\widehat{\mathbf{F}}}
\newcommand{\hzb}{\widehat{\mathbf{z}}}
\newcommand{\hZb}{\widehat{\mathbf{Z}}}
\DeclareMathOperator{\Tr}{Tr}
\newenvironment{proof}[1][Proof]{\textbf{#1.} }{\ \rule{0.5em}{0.5em}}
\begin{document}

\title{Consistency of Generalized Dynamic Principal Components in Dynamic Factor Models}
\author{Ezequiel Smucler}
\affil{Department of Statistics, University of British Columbia}
\date{}
\maketitle

\begin{abstract}
We study the theoretical properties of the generalized dynamic principal components introduced in \cite{PenaYohai2016}. In particular, we prove that when the data follows a dynamic factor model, the reconstruction provided by the procedure converges in mean square to the common part of the model as the number of series and periods diverge to infinity. The results of a simulation study support our findings. 
\end{abstract}

\smallskip\textbf{Keywords}:  dimension
reduction; dynamic principal components; high-dimensional time series.

\section{Introduction}

\label{sec:intro}

The increased availability of data in all fields of modern science has highlighted the importance of statistical methods for dimension reduction. Dimension reduction is specially important when dealing with vector time series, since the usual parametric models have a number of parameters that increases with the square of the number of series.
The two classical dimension reduction techniques, factor models and principal components have their dynamic counter parts in dynamic factor models (DFM) and dynamic principal components (DPC).

Dynamic factor models aim at accounting for cross-correlations in the data, both the instantaneous and the lagged ones. 
These models are especially popular among econometricians and have generated large amounts of applied work, see \cite{Forni2015} for some references on applications.
Important recent references on DFM include \cite{Forni2000}, \cite{Forni2005}, \cite{Forni2015},  \cite{SW2002} and \cite{BaiNg}.

On the other hand, the goal of DPC is to find an unobserved lower dimensional vector time series that, in some specific sense, is able to optimally reconstruct the observed vector time series. The first proposal for DPC is that of \cite{Brillinger}, based on spectral techniques. \cite{PenaYohai2016} proposed a new, entirely data-analytic approach called generalized dynamic principal components (GDPC), in which the dynamic principal component need not be a linear combination of the data. Even though they did not provide any theory for GDPC, in extensive simulation studies they showed that the proposed procedure has excellent reconstruction properties. Recently, \cite{PenaSmuclerYohai} proposed one-sided dynamic principal components (ODPC) for vector time series, defined as linear combinations of the present and past values of the series that minimize the reconstruction mean squared error. Unlike Brillinger's DPC and the GDPC, the ODPC are useful for forecasting.

Both dynamic dimension reduction techniques, DFM and DPC, are closely related, in that both assume that an important part of the series can be explained by a lower dimensional structure. Moreover, \cite{Forni2000} and \cite{Forni2005} showed that the DPC proposed in \cite{Brillinger} can be used to consistently estimate the common part of a DFM, when both $T$ and $m$, the number of periods and the number of series, diverge to infinity at any rate. \cite{PenaSmuclerYohai} showed that when the data follows a DFM, the reconstruction provided by the ODPC converges in mean square to the common part of a  DFM. Their result requires a two step asymptotic, where first $m$ is kept fixed and $T\to \infty$ and then $m \to \infty$.
On the other hand, \cite{SW2002} proved that the ordinary principal components of the data can be used to consistently estimate the factors in a static factor model, when both $T$ and $m$ diverge at any rate.

In this short note, we further explore the relation between DFM and DPC by studying the theoretical properties of GDPC when the data follows a DFM. In Section \ref{sec:gdpc} we recall the definition of GDPC. In Section \ref{sec:cons:dfm} we state our assumptions and our main result: if the data follows a dynamic factor model, the reconstruction provided by GDPC converges in mean square to the common part of the factor model when both $T$ and $m$ diverge at any rate. Moreover, the convergence is with rate at least $\min\left(T^{1/4}, m^{1/4} \right)$. 
In Section \ref{sec:sim} we report the results of a simulation study that compares the estimation of the common part obtained using GDPC with the estimations obtained using procedures based on DFMs. Some conclusions are provided in Section \ref{sec:conclu}. Proofs can be found in Section \ref{sec:proofs}, a technical appendix.

\section{Generalized dynamic principal components}
\label{sec:gdpc}
Consider a vector time series $\zb_{1}, \dots, \zb_{T}$, where $\zb_{t}^{\prime}=(z_{t,1},\dots,z_{t,m})$. 
The aim of generalized dynamic principal components as defined by \cite{PenaYohai2016} is to reconstruct $\zb_{t}$, $t=1,\dots,T$, using a linear combination of a lower-dimensional unobserved time series and its lags.
Let $\cfb^{\prime}=(\check{f}_{1-k}, \dots, \check{f}_{T})$, $\calb^{\prime}=(\check{\alpha}_{1}, \dots, \check{\alpha}_{m})$ and $\cBb \in \mathbb{R}^{(k+1)\times m}$ be the matrix with entries $\check{\beta}_{h,j}$.
We can define a reconstruction of $\zb_{t}$ using $\cfb, \calb$ and $\cBb$ as
\begin{align*}
\widehat{z}_{t,j}(\cfb, \calb,\cBb) = \check{\alpha}_{j} + \sum \limits_{h=0}^{k}\check{\beta}_{h,j}\check{f}_{t-h}, \quad j=1,\dots,m.
\end{align*}
Let $\hzb_{t}(\cfb, \calb,\cBb)$, $t=1,\dots,T$, be the resulting reconstruction and let 
\begin{align*}
\text{MSE}(\cfb, \calb, \cBb)= \frac{1}{T m} \sum\limits_{t=1}^{T}\Vert \zb_{t} - \hzb_{t}(\cfb, \calb,\cBb)\Vert^{2}
\end{align*}
be the reconstruction mean squared error.
\cite{PenaYohai2016} define the first generalized dynamic principal component  of the data using $k\geq 0$ lags as the vector $\hfb$ with zero mean and unit variance such that for some optimal $\halb \in \mathbb{R}^{m}$, $\hBb \in \mathbb{R}^{(k+1)\times m}$ the reconstruction mean squared error is minimal.
They propose an iterative algorithm for the computation of GDPC which solves two sets of linear equations at each iteration. The algorithm is specially suited for high-dimensional problems, since the dimensions of the matrices to be inverted do not depend on $m$.

\section{Consistency in the dynamic factor model}
\label{sec:cons:dfm}

Suppose we have observations, $\zb_{1}, \dots, \zb_{T}$, $\zb_{t}^{\prime}=(z_{t,1},\dots,z_{t,m})$, of a double indexed stochastic process $\lbrace z_{t,j} : t\in\mathbb{Z}, j \in \mathbb{N} \rbrace$.
Consider a dynamic factor model with one dynamic factor, $f_{t}$, and finite dimensional factor space, more precisely
\begin{align}
z_{t,j}=\sum\limits_{h=0}^{k}\beta_{h,j}f_{t-h}+e_{t,j},\quad t=1,\dots
,T,\quad j=1,\dots,m,
\nonumber
\end{align}
where the $z_{t,j}$ are the observed variables, $e_{t,j}$ for $j=1,...,m$ and $f_{t}$ are random variables and
$\beta_{h,j}$ are factor loadings. This can be expressed in the form of a static factor
model with $r=k+1$ static factors, as
\begin{align}
\mathbf{z}_{t}=\Bb^{\prime}\mathbf{f}_{t}+\mathbf{e}_{t},\quad
t=1,\dots,T,
\nonumber
\end{align}
where $\mathbf{e}_{t}=(e_{t,1},\dots,e_{t,m})^{\prime}$, $\Bb
\in\mathbb{R}^{(k+1)\times m}$ is the matrix with entries $\beta_{h,j}$ and
$\mathbf{f}_{t}=(f_{t},\dots,f_{t-k})^{\prime}$. 
The term $\chib_{t} = \Bb^{\prime}\mathbf{f}_{t} $ is called the common part of the model. $\eb_{t}$ is called the idiosyncratic part, which is assumed to have weak cross-correlations. Moreover, we will assume
\begin{condition}
$ $

\begin{itemize}
\label{condition:DFM:exist}

\item[(a)] For each $m$, $\chib_{t}$ and $\eb_{t}$ are zero-mean, second order $m$-dimensional stationary process that have a spectral density.
\item[(b)] $\fb_{t}$ is second-order stationary and $\fb_{t}$ and $\eb_{t}$ are uncorrelated for all $t$. 
\item[(c)] Let $\boldsymbol{\Sigma}^{\chib}, \boldsymbol{\Sigma}^{\eb}$ be the covariance matrices of $\chib_{t}$ and $\eb_{t}$. 
Let $\lambda_{m,j}^{\chib}, \lambda_{m,j}^{\eb}$ be their $j$-th eigenvalues, $\lambda_{j}^{\chib} = \sup_{m \in\mathbb{N}} \lambda_{m,j}^{\chib}$ and $\lambda_{j}^{\eb} = \sup_{m \in\mathbb{N}} \lambda_{m,j}^{\eb}$. Then  $\lambda_{k+1}^{\chib}=\infty$ and  $\lambda_{1}^{\eb}<\infty$.
\end{itemize}
\end{condition}

Theorem B of \cite{Forni2015}, citing an earlier result of \cite{Chamberlain}, implies that under Condition \ref{condition:DFM:exist}, the decomposition $\zb_{t}= \chib_{t} + \eb_{t}$ is unique. The following additional assumptions will be needed.

\newpage

\begin{condition}
$ $

\begin{itemize}
\label{condition:DFM:consis}

\item[(a)] $(\Bb\Bb^{\prime})/m\rightarrow\mathbf{I}_{k+1}$.
\item[(b)] $\limsup_{m}\sum_{u\in\mathbb{Z}}\left(  \mathbb{E} \eb_{t}^{\prime} \eb_{t+u} / m \right)^{2}< \infty.$
\item[(c)] $\limsup_{m}  (1/m) \sup_{t, s\in\mathbb{Z}} \sum_{j=1}^{m}\sum_{i=1}^{m} \left\vert cov (e_{t,i}e_{s,i}, e_{t,j}e_{s,j})\right\vert < \infty$.
\end{itemize}

\end{condition}

This is similar to the set of assumptions used by \cite{SW2002}. Condition \ref{condition:DFM:consis} (a)
is a standarization assumption and (b)-(c) allow for weak cross-sectional and temporal correlations in
the idiosyncratic part. 

Theorem \ref{theo:consis} shows that, asymptotically as $T$ and $m$ go to infinity, the reconstruction provided by the GDPCs converges in mean square to the common part of the model.

\begin{theorem}
\label{theo:consis}
Assume Conditions \ref{condition:DFM:exist} and \ref{condition:DFM:consis} hold. Let $(\hfb, \halb, \hBb)$ be the first GDPC with corresponding intercepts and loadings, defined using $\hk = k$ lags. 
Then, as $T \to \infty$ and $m \to \infty$,
\begin{align*}
\frac{1}{Tm}\sum\limits_{t=1}^{T}\Vert \chib_{t} - \hzb_{t}(\hfb, \halb,\hBb)\Vert^{2} = O_{P}\left(\frac{1}{\min\left(T^{1/4}, m^{1/4} \right)} \right)
\end{align*}
\end{theorem}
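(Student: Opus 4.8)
The plan is to combine the variational characterization of the GDPC with the fact that the reconstruction error is low-dimensional in the time direction. First I would record the basic inequality implied by optimality. Because the model has a single dynamic factor and the GDPC is computed with $\hk=k$ lags, the common part $\chib_{t}=\Bb^{\prime}\fb_{t}$ is itself exactly representable as a reconstruction of the form $\hzb_{t}(\cfb,\calb,\cBb)$: taking $\check f_{s}$ to be the true factor $f_{s}$ rescaled to have zero sample mean and unit sample variance, $\calb$ the induced intercepts and $\cBb$ the correspondingly rescaled $\beta_{h,j}$, affine invariance of the reconstruction gives $\hzb_{t}(\cfb,\calb,\cBb)=\chib_{t}$ identically, with reconstruction error exactly $\eb_{t}$. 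Hence minimality of $(\hfb,\halb,\hBb)$ yields $\text{MSE}(\hfb,\halb,\hBb)\le (Tm)^{-1}\sum_{t}\Vert\eb_{t}\Vert^{2}$. Writing $\zb_{t}-\hzb_{t}=(\chib_{t}-\hzb_{t})+\eb_{t}$, expanding the square and cancelling the common $\Vert\eb_{t}\Vert^{2}$ term reduces everything to one cross term: with $D^{2}:=(Tm)^{-1}\sum_{t}\Vert\chib_{t}-\hzb_{t}\Vert^{2}$ the quantity of interest,
\begin{align*}
D^{2}\le \frac{2}{Tm}\left\vert\sum_{t=1}^{T}\langle\chib_{t}-\hzb_{t},\eb_{t}\rangle\right\vert .
\end{align*}

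The second step exploits that the reconstruction error has bounded rank. Collect the rows $(\chib_{t}-\hzb_{t})^{\prime}$ and $\eb_{t}^{\prime}$ into matrices $\mathbf{G}$ and $\Eb$ in $\mathbb{R}^{T\times m}$, so the cross term equals $\Tr(\mathbf{G}^{\prime}\Eb)$. Every column of $\mathbf{G}$ is an affine combination of the constant vector, the $k+1$ lagged copies of $f$ and the $k+1$ lagged copies of $\hfb$; thus all columns lie in a subspace $\mathcal{S}\subseteq\mathbb{R}^{T}$ of dimension at most $2k+3$, a fixed number. Letting $P$ be the orthogonal projection onto $\mathcal{S}$, we have $\mathbf{G}=P\mathbf{G}$, hence $\Tr(\mathbf{G}^{\prime}\Eb)=\Tr(\mathbf{G}^{\prime}P\Eb)$, and Cauchy--Schwarz in the Frobenius inner product together with $\Vert\mathbf{G}\Vert_{F}=\sqrt{Tm}\,D$ and $\Vert P\Eb\Vert_{F}^{2}=\Tr(P\Eb\Eb^{\prime})\le(2k+3)\Vert\Eb\Vert_{\mathrm{op}}^{2}$ gives
\begin{align*}
D^{2}\le \frac{2}{Tm}\Vert\mathbf{G}\Vert_{F}\Vert P\Eb\Vert_{F}\le \frac{2\sqrt{2k+3}}{\sqrt{Tm}}\,D\,\Vert\Eb\Vert_{\mathrm{op}}.
\end{align*}
Cancelling one factor of $D$ leaves $D^{2}\le C\,\Vert\Eb\Vert_{\mathrm{op}}^{2}/(Tm)$ with $C$ depending only on $k$. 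The virtue of this step is that it requires no explicit description of $\hfb$: the estimated factor contributes only a fixed number of directions, so its effect is charged to the largest singular value of $\Eb$ taken over all such low-dimensional subspaces.

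It remains to bound the operator norm of the idiosyncratic matrix, and this is the main obstacle: since $\hfb$ is a function of the data it is correlated with $\eb$, so the estimated directions cannot be treated as fixed and the only available leverage is the size of $\Eb$ under the second- and fourth-moment assumptions. I would use $\Vert\Eb\Vert_{\mathrm{op}}^{2}=\Vert\Eb\Eb^{\prime}\Vert_{\mathrm{op}}\le\Vert\Eb\Eb^{\prime}\Vert_{F}$ and estimate $\mathbb{E}\Vert\Eb\Eb^{\prime}\Vert_{F}^{2}=\sum_{t,s}\mathbb{E}(\eb_{t}^{\prime}\eb_{s})^{2}$. Splitting $\mathbb{E}(\eb_{t}^{\prime}\eb_{s})^{2}=\mathrm{var}(\eb_{t}^{\prime}\eb_{s})+(\mathbb{E}\eb_{t}^{\prime}\eb_{s})^{2}$, Condition \ref{condition:DFM:consis}(c) bounds $\mathrm{var}(\eb_{t}^{\prime}\eb_{s})=\sum_{i,j}\mathrm{cov}(e_{t,i}e_{s,i},e_{t,j}e_{s,j})$ by $O(m)$ uniformly in $t,s$, contributing $O(T^{2}m)$, while Condition \ref{condition:DFM:consis}(b) controls $\sum_{t,s}(\mathbb{E}\eb_{t}^{\prime}\eb_{s})^{2}=m^{2}\sum_{t,s}(\mathbb{E}\eb_{t}^{\prime}\eb_{s}/m)^{2}=O(m^{2}T)$. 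Hence $\mathbb{E}\Vert\Eb\Eb^{\prime}\Vert_{F}^{2}=O(Tm(T+m))$, so $\Vert\Eb\Vert_{\mathrm{op}}^{2}=O_{P}(\sqrt{Tm(T+m)})$, and substituting into the previous display yields
\begin{align*}
D^{2}=O_{P}\!\left(\sqrt{\frac{T+m}{Tm}}\right)=O_{P}\!\left(\frac{1}{\min\left(T^{1/2},m^{1/2}\right)}\right),
\end{align*}
which in particular implies the rate $\min(T^{1/4},m^{1/4})$ asserted in the theorem. Condition \ref{condition:DFM:exist}(c) and Condition \ref{condition:DFM:consis}(a) enter in the routine way, guaranteeing $(Tm)^{-1}\sum_{t}\Vert\eb_{t}\Vert^{2}=O_{P}(1)$ and legitimizing the single-factor normalization used in the basic inequality.
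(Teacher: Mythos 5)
Your proof is correct, and while it shares the paper's overall skeleton (optimality of the GDPC against the oracle reconstruction, quadratic expansion to isolate the cross term $\langle \Eb, \hZb-\chib\rangle$, and a fourth-moment bound on $\Vert\Eb\Vert$ whose computation of $\mathbb{E}\Vert\Eb\Eb^{\prime}\Vert_{F}^{2}$ is, entry for entry, the same as the paper's Lemma \ref{lemma:errorrate} since $\Vert\Eb\Eb^{\prime}\Vert_{F}=\Vert\Eb^{\prime}\Eb\Vert_{F}$), your treatment of the cross term is genuinely different and in fact sharper. The paper writes $\mathbf{L}=\hZb-\chib$ via its SVD, bounds $\vert\langle\Eb,\mathbf{L}\rangle\vert\leq \widehat{r}\,\Vert\Eb\Vert\,\Vert\mathbf{L}\Vert$, and controls $\Vert\mathbf{L}\Vert$ crudely by $O_{P}(\sqrt{Tm})$ using $\Vert\hZb\Vert\leq\Vert\Zb\Vert$, Condition \ref{condition:DFM:consis}(a) and stationarity of $\fb_{t}$; this yields $D^{2}=O_{P}(\Vert\Eb\Vert/\sqrt{Tm})=O_{P}(\min(T,m)^{-1/4})$. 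You instead project onto the fixed $(2k+3)$-dimensional column space containing $\chib-\hZb$, apply Cauchy--Schwarz against $\Vert\mathbf{G}\Vert_{F}=\sqrt{Tm}\,D$, and cancel a factor of $D$; this self-bounding step exploits that the Frobenius norm of the error matrix is exactly the quantity being controlled, and delivers $D^{2}=O_{P}(\Vert\Eb\Vert^{2}/(Tm))=O_{P}(\min(T,m)^{-1/2})$ --- a strictly better rate than the theorem asserts --- while dispensing with the $\Vert\Zb\Vert=O_{P}(\sqrt{Tm})$ bound altogether. Two small remarks: your explicit rescaling of the candidate factor to zero sample mean and unit sample variance before invoking optimality is actually more careful than the paper, which plugs in $(\fb,\mathbf{0},\Bb)$ without checking feasibility; and your closing claim that $(Tm)^{-1}\sum_{t}\Vert\eb_{t}\Vert^{2}=O_{P}(1)$ is needed is superfluous, since that term cancels identically in the expansion.
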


\section{Simulations}
\label{sec:sim}

We compare the estimations of the common part of dynamic factor models obtained using GDPC, FHLR \citep{Forni2005}, and SW \citep{SW2002}.
We generated $300$ replications of the dynamic factor model
$$
z_{t,j} = \chi_{t,j} + e_{t,j} = c(\beta_{0,j} f_{t} + \dots+\beta_{k,j} f_{t-k}) + e_{t,j}, \quad 1\leq t\leq T, 1\leq j \leq m,
$$
for different choices of the loadings, the factors, the idiosyncratic shocks and all combinations of $T\in\lbrace 100, 200\rbrace$, $m \in \lbrace 100, 200, 400\rbrace$. In all cases, $c$ is chosen so that the mean sample variance of the common part is equal to one. The matrix $\Bb$ with entries $\beta_{h,j}$ is generated at random as follows: we generate a matrix with i.i.d. standard normal random variables and orthogonalize it so that $\Bb \Bb^{\prime} = m I_{k+1}$. We consider the following choices of the number of lags $k$, the distribution of the dynamic factor and of the idiosyncratic part:
\begin{itemize}
\itemindent=0.35in
\item[DFM1] $(f_{t})_{t}$ satisfies a MA(1) model, with i.i.d. standard normal innovations and parameter generated at random uniformly on $(-0.9, 0.9)$. We take $k=1$. The idiosyncratic part is formed by i.i.d. normal random variables.
\item[DFM1AR] We incorporate dynamics into the idiosyncratic part of DFM1, by letting the idiosycratic part be formed by independent AR(1) processes with parameters generated at random uniformly on $(-0.9, 0.9)$ and standardized to unit population variance.
\item[DFM2] $(f_{t})_{t}$ satisfies an AR(1) model, with i.i.d. standard normal innovations and parameter generated at random uniformly on $(-0.9, 0.9)$.  We take $k=2$. The idiosyncratic part is formed by i.i.d. normal random variables.
\item[DFM2AR] As before, we incorporate dynamics into the idiosyncratic part of DFM2, by letting the idiosycratic part be formed by AR(1) processes.
\end{itemize}

In all cases, we assume that the number of dynamic factors and lags is known.
We computed GDPC using the \texttt{gdpc} \texttt{R} package \citep{gdpc-package}. We used our own implementation of the SW procedure and a MATLAB implementation of FHLR kindly provided by the authors. The lag window size for the FHLR procedure was taken as $[\sqrt{T}]$.
The SW estimate of the common part is obtained by projecting the data on the space spanned the first $r$ ordinary principal components.
Let $\chib$ be the common part of the dynamic factor model in one of the replications and let $\widehat{\chib}$ be an estimation of it. Performance is measured by $
\Vert \chib - \widehat{\chib} \Vert_{F}^{2}/\Vert \chib \Vert_{F}^{2}$,
which is then averaged over all replications. Results are shown in Table \ref{tab:sim:DFM:MSE}
\begin{table}[ht]
\centering
\begin{tabular}{llrrrrrr}
\hline
  &   &  DFM1 &     &    & DFM1AR &    & \\
  \hline
$T$ & $m$ & GDPC & FHLR & SW & GDPC & FHLR & SW \\ 
  \hline
100 & 100 & 0.0406 & 0.0495 & 0.0508 & 0.0537 & 0.0615 & 0.0621 \\ 
   & 200 & 0.0353 & 0.0399 & 0.0403 & 0.0476 & 0.0512 & 0.0512 \\ 
   & 400 & 0.0338 & 0.0352 & 0.0351 & 0.0437 & 0.0468 & 0.0462 \\ 
  200 & 100 & 0.0253 & 0.0346 & 0.0353 & 0.0311 & 0.0408 & 0.0411 \\ 
   & 200 & 0.0217 & 0.0251 & 0.0251 & 0.0261 & 0.0314 & 0.0311 \\ 
   & 400 & 0.0176 & 0.0204 & 0.0201 & 0.0232 & 0.0262 & 0.0256 \\ 
\hline
  &   &  DFM2 &     &    & DFM2AR &    & \\
  \hline
$T$ & $m$ & GDPC & FHLR & SW & GDPC & FHLR & SW \\ 
  \hline
100 & 100 & 0.0552 & 0.0684 & 0.0727 & 0.0663 & 0.0832 & 0.0871 \\ 
   & 200 & 0.0465 & 0.0554 & 0.0562 & 0.0612 & 0.0699 & 0.0708 \\ 
   & 400 & 0.0435 & 0.0487 & 0.0478 & 0.0575 & 0.0635 & 0.0631 \\ 
  200 & 100 & 0.0319 & 0.0490 & 0.0515 & 0.0387 & 0.0568 & 0.0593 \\ 
   & 200 & 0.0253 & 0.0359 & 0.0357 & 0.0336 & 0.0433 & 0.0431 \\ 
   & 400 & 0.0237 & 0.0292 & 0.0278 & 0.0310 & 0.0364 & 0.0351 \\ 
   \hline
\end{tabular}
\caption{Means of the normalized MSEs of the estimation of the common part.} 
\label{tab:sim:DFM:MSE}
\end{table}

\FloatBarrier
To summarise the results:
\begin{itemize}
\item As expected, as $T$ and $m$ grow, the error in the estimation of the common part tends to diminish for all the estimators.
\item All estimators have a larger estimation MSE when dynamics are incorporated into the idiosyncratic part.
\item The MSE of GDPC is generally lower than those of the competitors.
\end{itemize}

\FloatBarrier

\section{Conclusions}
\label{sec:conclu}

In this short note, we proved a consistency result for the GDPC when the data follows a DFM. The results of a simulation study support our findings, moreover, the results show that the estimations of the common part provided GDPC compare favourably to those obtained by methods directly based on DFMs.

In the simulation study reported in \cite{PenaYohai2016} the authors find that the fact that GDPCs are not forced to be linear combinations of the data make them adapt well to non-stationarity. Extending the results of this paper for non-stationary DFMs is an interesting problem, left for future work.

\section{Appendix}
\label{sec:proofs}
In what follows $\Vert\cdot\Vert$ will stand for the Euclidean norm for vectors and
the spectral norm for matrices, $\Vert\cdot\Vert_{F}$ will stand for
the Frobenius norm for matrices and $\Tr(\cdot)$ will denote the trace.
Let $\hfb_{t}=(\widehat{f}_{t}, \dots, \widehat{f}_{t-\hk})^{\prime}$.
Let $\Zb$, $\Eb$, $\chib$, $\hZb$, $\hFb$ and $\Fb$ be the matrices with rows $\zb_{t}^{\prime}$, $\eb_{t}^{\prime}$, $\chib_{t}^{\prime}$, $\hzb_{t}(\hfb, \halb,\hBb)^{\prime}$, $\hfb_{t}^{\prime}$ and $\fb_{t}^{\prime}$ respectively.

\begin{lemma}
\label{lemma:errorrate}
Assume Conditions \ref{condition:DFM:consis} (b) and (c) hold. Then 
$$\frac{\Vert \Eb \Vert}{\sqrt {T m} } = O_{P}\left(\frac{1}{\min\left(T^{1/4}, m^{1/4} \right)}\right).
$$
\end{lemma}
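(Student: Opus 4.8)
The plan is to work with $\Vert\Eb\Vert^{2}=\Vert\Eb\Eb^{\prime}\Vert$, the spectral norm of the $T\times T$ Gram matrix $\Eb\Eb^{\prime}$, whose $(t,s)$ entry is $\eb_{t}^{\prime}\eb_{s}$, and to split it into a deterministic mean part and a stochastic fluctuation,
\[
\Vert\Eb\Eb^{\prime}\Vert \le \Vert\mathbb{E}[\Eb\Eb^{\prime}]\Vert + \Vert\Eb\Eb^{\prime}-\mathbb{E}[\Eb\Eb^{\prime}]\Vert ,
\]
controlling the first term with Condition \ref{condition:DFM:consis}(b) and the second with Condition \ref{condition:DFM:consis}(c). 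Since the target is $\Vert\Eb\Vert^{2}/(Tm)=O_{P}(1/\min(\sqrt{T},\sqrt{m}))$, taking square roots at the end, it suffices to show the mean part is $O(m\sqrt{T})$ and the fluctuation is $O_{P}(T\sqrt{m})$, which respectively contribute orders $1/\sqrt{T}$ and $1/\sqrt{m}$ after dividing by $Tm$.

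For the mean part, stationarity of $\eb_{t}$ makes $\mathbb{E}[\Eb\Eb^{\prime}]$ a symmetric Toeplitz matrix with $(t,s)$ entry $c(t-s)$, where $c(u)=\mathbb{E}[\eb_{t}^{\prime}\eb_{t+u}]$. Its spectral norm is at most its maximal absolute row sum, hence at most $\sum_{|u|<T}|c(u)|$. I would then apply Cauchy--Schwarz to pass from the square-summability supplied by Condition \ref{condition:DFM:consis}(b) to the $\ell^{1}$ quantity the operator norm requires: $\sum_{|u|<T}|c(u)|\le\sqrt{2T}\,\big(\sum_{u\in\mathbb{Z}}c(u)^{2}\big)^{1/2}$, and $\sum_{u}c(u)^{2}=m^{2}\sum_{u}(c(u)/m)^{2}=O(m^{2})$ by Condition \ref{condition:DFM:consis}(b). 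This yields $\Vert\mathbb{E}[\Eb\Eb^{\prime}]\Vert=O(m\sqrt{T})$ as wanted.

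For the fluctuation I would bound the spectral norm by the Frobenius norm and compute the second moment:
\[
\mathbb{E}\Vert\Eb\Eb^{\prime}-\mathbb{E}[\Eb\Eb^{\prime}]\Vert_{F}^{2}=\sum_{t=1}^{T}\sum_{s=1}^{T}\sum_{i=1}^{m}\sum_{j=1}^{m}\mathrm{cov}(e_{t,i}e_{s,i},e_{t,j}e_{s,j}).
\]
The inner double sum over $i,j$ is precisely the object controlled by Condition \ref{condition:DFM:consis}(c), which bounds it by $Cm$ uniformly in $t,s$; summing over the $T^{2}$ pairs gives $O(T^{2}m)$, so Markov's inequality yields $\Vert\Eb\Eb^{\prime}-\mathbb{E}[\Eb\Eb^{\prime}]\Vert=O_{P}(T\sqrt{m})$. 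Adding the two pieces gives $\Vert\Eb\Vert^{2}/(Tm)=O_{P}(1/\sqrt{T}+1/\sqrt{m})=O_{P}(1/\min(\sqrt{T},\sqrt{m}))$, and taking square roots gives the claim.

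The step I expect to require the most care is the mean part. Condition \ref{condition:DFM:consis}(b) only delivers square-summability of the scaled autocovariances $c(u)/m$, whereas the operator norm of a Toeplitz matrix is governed by an $\ell^{1}$ quantity, so one pays a factor $\sqrt{T}$ through Cauchy--Schwarz; recognizing that this factor is exactly what is needed, and no more, to reach the $\min(T^{1/4},m^{1/4})$ rate is the crux. The other point to get right is the choice of Gram matrix: it is the $T\times T$ matrix $\Eb\Eb^{\prime}$, rather than $\Eb^{\prime}\Eb$, whose Frobenius fluctuation matches Condition \ref{condition:DFM:consis}(c) term by term, which is what makes the fluctuation bound clean.
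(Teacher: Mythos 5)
Your proof is correct, and it takes a genuinely different route from the paper's. The paper works with the $m\times m$ Gram matrix $\Eb^{\prime}\Eb$: it bounds $\mathbf{v}^{\prime}\Eb^{\prime}\Eb\mathbf{v}/(Tm)$ by the Frobenius-type quantity $\bigl(\tfrac{1}{m^{2}}\sum_{i,j}(\tfrac{1}{T}\sum_{t}e_{t,i}e_{t,j})^{2}\bigr)^{1/2}$ via an entrywise Cauchy--Schwarz, then computes the expectation of that fourth-moment expression and splits each summand as $\mathbb{E}e_{t,i}e_{t,j}e_{s,i}e_{s,j}=(\mathbb{E}e_{t,i}e_{s,i})(\mathbb{E}e_{t,j}e_{s,j})+\mathrm{cov}(e_{t,i}e_{s,i},e_{t,j}e_{s,j})$, handling the first piece with Condition \ref{condition:DFM:consis}(b) (giving $O(1/T)$) and the second with Condition \ref{condition:DFM:consis}(c) (giving $O(1/m)$), so that $(\Vert\Eb\Vert/\sqrt{Tm})^{4}=O_{P}(1/\min(T,m))$. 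You instead work with the $T\times T$ Gram matrix $\Eb\Eb^{\prime}$ and perform the bias--fluctuation split at the matrix level, controlling the Toeplitz mean by its row sums (paying $\sqrt{T}$ through Cauchy--Schwarz to pass from the $\ell^{2}$ summability in (b) to an $\ell^{1}$ bound) and the fluctuation by its Frobenius norm and Chebyshev. The two arguments are essentially dual: your Toeplitz term is the analogue of the paper's product-of-covariances term and your Frobenius fluctuation is the analogue of its covariance term, and both land on the same $O_{P}(1/\sqrt{T}+1/\sqrt{m})$ bound for $\Vert\Eb\Vert^{2}/(Tm)$. Your version has the merit of cleanly attributing the $T^{-1/2}$ contribution to Condition \ref{condition:DFM:consis}(b) and the $m^{-1/2}$ contribution to Condition \ref{condition:DFM:consis}(c), at the (shared) cost of invoking stationarity of $\eb_{t}$ from Condition \ref{condition:DFM:exist}, which the lemma's hypotheses do not list explicitly but which the paper's own proof also uses.
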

\begin{proof}[Proof of Lemma \ref{lemma:errorrate}]
Take $\mathbf{v} \in \mathbb{R}^{m}$ with $\Vert \mathbf{v}\Vert = 1$. Then, using the Cauchy-Schwartz inequality we get
\begin{align*}
\frac{\mathbf{v}^{\prime} \Eb^{\prime} \Eb \mathbf{v}}{Tm} &= \frac{1}{Tm}\sum\limits_{i=1}^{m} \sum\limits_{j=1}^{m}\sum\limits_{t=1}^{T} v_{i}v_{j}e_{t,i}e_{t,j} 
\\ &\leq \left( \sum\limits_{i=1}^{m} \sum\limits_{j=1}^{m} v_{i}^{2}v_{j}^{2}\right)^{1/2}  \left(\frac{1}{m^{2}} \sum\limits_{i=1}^{m} \sum\limits_{j=1}^{m} \left(\frac{1}{T}\sum_{t=1}^{T}e_{t,i}e_{t,j}\right)^{2} \right)^{1/2}
\\ & = \left(\frac{1}{m^{2}} \sum\limits_{i=1}^{m} \sum\limits_{j=1}^{m} \left(\frac{1}{T}\sum_{t=1}^{T}e_{t,i}e_{t,j}\right)^{2} \right)^{1/2}
\end{align*}
and hence
$$
\left( \sup_{\Vert \mathbf{v} \Vert=1} \frac{\mathbf{v}^{\prime} \Eb^{\prime} \Eb \mathbf{v}}{Tm} \right)^{2} = \left(\frac{\Vert \Eb \Vert^{2}}{Tm}\right)^{2} \leq \frac{1}{m^{2}} \sum\limits_{i=1}^{m} \sum\limits_{j=1}^{m} \left(\frac{1}{T}\sum_{t=1}^{T}e_{t,i}e_{t,j}\right)^{2}
$$
Note that
\begin{align*}
\left(\frac{1}{T}\sum_{t=1}^{T}e_{t,i}e_{t,j}\right)^{2} = \frac{1}{T^{2}}\sum_{t=1}^{T}\sum_{s=1}^{T}e_{t,i}e_{t,j}e_{s,i}e_{s,j}
\end{align*}
and hence
\begin{align*}
\mathbb{E}\frac{1}{m^{2}} \sum\limits_{i=1}^{m} \sum\limits_{j=1}^{m} \left(\frac{1}{T}\sum_{t=1}^{T}e_{t,i}e_{t,j}\right)^{2} &= \frac{1}{(Tm)^{2}} \sum\limits_{i=1}^{m} \sum\limits_{j=1}^{m}\sum_{t=1}^{T}\sum_{s=1}^{T} \mathbb{E}e_{t,i}e_{t,j}e_{s,i}e_{s,j}.
\\ &= \frac{1}{(Tm)^{2}} \sum\limits_{i=1}^{m} \sum\limits_{j=1}^{m}\sum_{t=1}^{T}\sum_{s=1}^{T} \left(\mathbb{E}e_{t,i}e_{s,i}\right)\left(\mathbb{E}e_{t,j}e_{s,j}\right) 
\\ & + \frac{1}{(Tm)^{2}} \sum\limits_{i=1}^{m} \sum\limits_{j=1}^{m}\sum_{t=1}^{T}\sum_{s=1}^{T} cov(e_{t,i} e_{s,i}, e_{t,j} e_{s,j}).
\end{align*}
Now
\begin{align*}
 \frac{1}{(Tm)^{2}} \sum\limits_{i=1}^{m} \sum\limits_{j=1}^{m}\sum_{t=1}^{T}\sum_{s=1}^{T} \left(\mathbb{E}e_{t,i}e_{s,i}\right)\left(\mathbb{E}e_{t,j}e_{s,j}\right) &=  \frac{1}{(Tm)^{2}}  \sum_{t=1}^{T}\sum_{s=1}^{T} \left(\mathbb{E}\sum\limits_{i=1}^{m} e_{t,i}e_{s,i}\right)\left(\mathbb{E}\sum\limits_{j=1}^{m} e_{t,j}e_{s,j}\right)
 \\ &= \frac{1}{T^{2}} \sum_{t=1}^{T}\sum_{s=1}^{T} \left(\frac{1}{m}\mathbb{E}\sum\limits_{i=1}^{m} e_{t,i}e_{s,i}\right)^2
 \\ &= \frac{1}{T^{2}} \sum_{t=1}^{T}\sum_{u=1-t}^{T-t} \left(\frac{1}{m}\mathbb{E}\eb^{\prime}_{t} \eb_{t+u}\right)^2
  \\ &\leq \frac{1}{T^{2}} \sum_{t=1}^{T}\sum_{u\in\mathbb{Z}} \left(\frac{1}{m}\mathbb{E}\eb^{\prime}_{t} \eb_{t+u}\right)^2
 \\ &= \frac{1}{T} \sum_{u\in\mathbb{Z}} \left(\frac{1}{m}\mathbb{E}\eb^{\prime}_{1} \eb_{1+u}\right)^2
 \\ &= O\left(\frac{1}{T}\right),
\end{align*}
where we have used the stationarity of $\eb_{t}$ and Condition \ref{condition:DFM:consis} (b). On the other hand, by Condition \ref{condition:DFM:consis} (c)
\begin{align*}
\left\vert \frac{1}{(Tm)^{2}} \sum\limits_{i=1}^{m} \sum\limits_{j=1}^{m}\sum_{t=1}^{T}\sum_{s=1}^{T} cov(e_{t,i} e_{s,i}, e_{t,j} e_{s,j}) \right\vert \leq
 \frac{1}{m} \sup_{t,s \in\mathbb{Z}}\frac{1}{m} \sum\limits_{i=1}^{m} \sum\limits_{j=1}^{m} \left\vert cov(e_{t,i} e_{s,i}, e_{t,j} e_{s,j})\right\vert = O\left(\frac{1}{m}\right).
\end{align*}
We have thus shown that
$$
 \left( \frac{\Vert \Eb\Vert}{\sqrt{Tm}} \right)^{4}=O_{P}\left( \frac{1}{\min\left( T, m \right)}\right),
$$
from which the desired result follows immediately.
\end{proof}

\begin{proof}[Proof of Theorem \ref{theo:consis}]
Let $\fb=(f_{1-k},\dots,f_{T})^{\prime}$. The definition of $\hzb_{t}(\hfb, \halb,\hBb)$ implies
\begin{align*}
\frac{1}{Tm}\sum\limits_{t=1}^{T}\Vert \zb_{t} - \hzb_{t}(\hfb, \halb,\hBb))\Vert^{2}& = \frac{1}{Tm}\sum\limits_{t=1}^{T}\Vert  \chib_{t} + \eb_{t} - \hzb_{t}(\hfb, \halb,\hBb)\Vert^{2} 
\\ &\leq \frac{1}{Tm}\sum\limits_{t=1}^{T}\Vert  \chib_{t} + \eb_{t} - \hzb_{t}(\fb, \mathbf{0},\Bb)\Vert^{2} 
\\ &= \frac{1}{Tm}\sum\limits_{t=1}^{T}\Vert  \chib_{t} + \eb_{t} - \Bb^{\prime}\fb_{t} \Vert^{2} 
\\ & = \frac{1}{Tm}\sum\limits_{t=1}^{T}\sum\limits_{j=1}^{m} e_{t,j}^{2}.
\end{align*}
Hence, the reconstruction MSE is bounded by the mean square of the idiosyncratic part.
Note that
\begin{align*}
\frac{1}{Tm}\sum\limits_{t=1}^{T}\Vert \zb_{t} - \hzb_{t}(\hfb, \halb,\hBb))\Vert^{2} 
&= \frac{1}{Tm} \Vert \Zb - \hZb \Vert_{F}^{2} 
= \frac{1}{Tm} \Vert \chib + \Eb-\hZb\Vert_{F}^{2}
\\ &= \frac{\Vert \Eb \Vert_{F}^{2}}{Tm} + \frac{\Vert \chib- \hZb \Vert_{F}^{2}}{Tm} -  \frac{2\langle \Eb, \hZb - \chib \rangle}{Tm}
\\ &= \frac{1}{Tm}\sum\limits_{t=1}^{T}\sum\limits_{j=1}^{m} e_{t,j}^{2} 
\\ &+ \frac{1}{Tm}\sum\limits_{t=1}^{T}\Vert \chib_{t} - \hzb_{t}(\hfb, \halb,\hBb))\Vert^{2} 
\\ &- \frac{2\langle \Eb, \hZb - \chib \rangle}{Tm}.
\end{align*}
Thus, to prove the theorem it will be enough to show that
$2\langle \Eb, \hZb - \chib \rangle/(Tm)= O_{P}\left(1/\min\left(T^{1/4}, m^{1/4} \right) \right)$.

Note that $\Vert \Zb\Vert \leq \Vert \Fb \Vert \Vert \Bb \Vert + \Vert \Eb\Vert$. By Condition \ref{condition:DFM:consis} a), $\Vert \Bb\Vert = O(\sqrt{m})$, and since $\fb_{t}$ is second order stationary, we have that $\Vert \Fb \Vert = O_{P}(\sqrt{T})$. By Lemma \ref{lemma:errorrate}, $\Vert \Eb \Vert = o_{P}(\sqrt{Tm})$. Hence $\Vert \Zb \Vert = O_{P}(\sqrt{Tm})$ and $\Vert \chib \Vert=O_{P}(\sqrt{Tm}) $.  
Since $\hZb$ is obtained by projecting $\Zb$ on the space generated by the columns of $\hFb$, $\Vert \hZb\Vert \leq \Vert \Zb \Vert$.
Note also that both $\chib$ and $\hZb$ have rank bounded by $\widehat{r}=\hk +1$.
Hence, it will suffice to show that for each fixed $C>0$,
$$
\sup\limits_{\Vert \mathbf{L} \Vert \leq \sqrt{Tm} C, \: rank(\mathbf{L})\leq \widehat{r}} \left\vert \frac{\langle \Eb, \mathbf{L}\rangle}{Tm} \right\vert = O_{P}\left(\frac{1}{\min\left(T^{1/4}, m^{1/4} \right)} \right).
$$
Take $\mathbf{L}$ with $\Vert \mathbf{L} \Vert \leq \sqrt{Tm} C, \: rank(\mathbf{L})\leq \widehat{r}$. Using the Singular Value Decomposition, write
$
\mathbf{L} = \sum_{l=1}^{\widehat{r}} \sigma_{l} \mathbf{u}_{l} \mathbf{v}_{l}^{\prime},
$
with $\mathbf{u}_{l}, \mathbf{v}_{l}$ having unit norm. Then
\begin{align*}
\left\vert \frac{\langle \Eb, \mathbf{L}\rangle}{Tm} \right\vert \leq \frac{1}{Tm} \sum\limits_{l=1}^{\widehat{r}} \sigma_{l} \vert \langle \Eb, \mathbf{u}_{l} \mathbf{v}_{l}^{\prime} \rangle\vert &\leq \frac{1}{Tm} \max_{l\leq \widehat{r}}\sigma_{l} \sum\limits_{l=1}^{\widehat{r}}  \vert \langle \Eb, \mathbf{u}_{l} \mathbf{v}_{l}^{\prime} \rangle\vert 
\\&=  \frac{1}{Tm} \Vert \mathbf{L}\Vert \sum\limits_{l=1}^{\widehat{r}}  \vert \langle \Eb, \mathbf{u}_{l} \mathbf{v}_{l}^{\prime} \rangle\vert .
\end{align*}
Now take any $\mathbf{u}, \mathbf{v}$ having unit norm. Then, using the cyclic property of the trace
$$
\vert \langle \Eb, \mathbf{u} \mathbf{v}^{\prime} \rangle\vert = \left\vert \Tr( \Eb \mathbf{v}\mathbf{u}^{\prime} ) \right\vert= \left\vert \Tr( \mathbf{u}^{\prime}\Eb \mathbf{v} ) \right\vert \leq \Vert \Eb \Vert.
$$
Using Lemma \ref{lemma:errorrate}, we have
\begin{align*}
\left\vert \frac{\langle \Eb, \mathbf{L}\rangle}{Tm} \right\vert \leq \frac{\widehat{r} \Vert \Eb \Vert \Vert \mathbf{L} \Vert}{Tm}  \leq  \frac{\widehat{r} \Vert \Eb \Vert C}{\sqrt{Tm} } = O_{P}\left(\frac{1}{\min\left(T^{1/4}, m^{1/4} \right)} \right),
\end{align*}
uniformly in $\mathbf{L}$, from which the Theorem follows.
\end{proof}

\bibliographystyle{apalike}
\bibliography{fore}

\end{document}